\documentclass[11pt,letterpaper,reqno]{amsart}
\usepackage{tikz}
\usetikzlibrary{positioning, shapes.geometric, arrows.meta}
\usepackage{amssymb}
\usepackage{amsmath}
\usepackage{amsthm}
\usepackage{amsfonts}
\usepackage{bbm}
\usepackage{enumitem} 
\usepackage{pgfplots}
\pgfplotsset{compat=1.18} 
\usepackage{booktabs}

\usepackage{graphicx}
\usepackage[T1]{fontenc}
\usepackage{doi}
\usepackage{float} 
\usepackage{tikz}
\usepackage{pgfplots}
\pgfplotsset{compat=1.18}
\usetikzlibrary{arrows.meta}

\usepackage{bookmark}
\usepackage{hyperref}
\hypersetup{pdfstartview={FitH}}

\newtheorem{thm}{Theorem}[section]
\newtheorem{lem}[thm]{Lemma}
\newtheorem{prop}[thm]{Proposition}

\newtheorem{conj}[thm]{Conjecture}
\theoremstyle{definition}

\newtheorem{rem}[thm]{Remark}
\newtheorem{defin}[thm]{Definition}
\newtheorem{cons}[thm]{Construction}
\numberwithin{equation}{section}

\makeatother

\DeclareMathOperator{\Eq}{Eq}
\DeclareMathOperator{\rk}{rk}
\DeclareMathOperator{\id}{id}
\DeclareMathOperator{\Fix}{Fix}
\DeclareMathOperator{\Aut}{Aut}
\begin{document}

\title[Colored Stallings graphs and counterexamples to the equalizer conjecture]{Colored Stallings graphs and counterexamples to the Stallings equalizer conjecture}
\author[J.~Lei]{Jialin Lei}
\address{Department of Mathematics, Binghamton University, Binghamton, NY 13902, USA}

\email{jlei15@binghamton.edu}
\author[T.~Zhang]{Teng Zhang}
\address{School of Mathematics and Statistics, Xi'an Jiaotong University, Xi'an 710049, P. R. China}
\email{teng.zhang@stu.xjtu.edu.cn}

\subjclass[2020]{20E05, 20E07, 20F65}
\keywords{Stallings equalizer conjecture; counterexamples; colored Stallings graphs; free groups.}

\dedicatory{Dedicated to the memory of Professor J.~R. Stallings}

\thanks{All the proofs in this paper are based on (colored) Stallings graphs, reflecting the profound mathematical insight of Professor J.~R. Stallings, which has deeply inspired the authors. The second author is supported by the China Scholarship Council, the Young Elite Scientists Sponsorship Program for PhD Students (China Association for Science and Technology), and the Fundamental Research Funds for the Central Universities at Xi'an Jiaotong University (Grant No.~xzy022024045).}
\begin{abstract} 
 The famous Stallings equalizer conjecture has remained open for more than 40
years, which asserts that for any free group \(F_n\) of rank \(n\ge 2\), any free group \(F\), and any two monomorphisms $g,h:F_n\to F,$
the equalizer
$\Eq(g,h)=\{w\in F_n\mid g(w)=h(w)\}$
satisfies
$\rk \Eq(g,h)\le n.$ The only known case is $n=2$, due to A. D. Logan in 2022. By introducing the notion of colored Stallings graphs, we show that for every integer \(n\ge 2\) there exist monomorphisms
$
g,h:F_n\longrightarrow F_2
$
such that
$
\rk\Eq(g,h)\ge 2n-2.
$ This disproves the Stallings equalizer conjecture for $n\ge 3$.
\end{abstract}

\maketitle
\tableofcontents

\section{Introduction}

Let \(F(\Sigma)\) and \(F(\Delta)\) be the free groups  generated by
the sets \(\Sigma\) and \(\Delta\), respectively. Let
$ g,h:F(\Sigma)\longrightarrow F(\Delta)$
be homomorphisms. Their \emph{equalizer}  is defined by
\[
        \Eq(g,h):=\{w\in F(\Sigma):g(w)=h(w)\}.
\]
 Stallings \cite[Problems
P1 \& 5]{Sta87} proposed his famous rank-$n$ equalizer conjecture, which states the following.
 \begin{conj}[Stallings]\label{conj:Stallings}  Let $n\ge2$. Let \(F_n\) be a free group of rank \(n\), and let \(F\) be a free group. Let
$ g,h:F_n\to F
$
be monomorphisms.  Then
\[
        \rk\Eq(g,h)\le n.
\]
 \end{conj}
Goldstein and Turner~\cite{GT85} proved that under the hypotheses of Conjecture~\ref{conj:Stallings}, $\Eq(g,h)$ is  finitely generated, that is,  \(\operatorname{Eq}(g,h)\) has finite rank.  Later, they \cite{GT86} proved that under the weaker assumption that at least one of \(g\) and \(h\) is injective,
\(\operatorname{Eq}(g,h)\) has finite rank. Conjecture \ref{conj:Stallings} is also considered under such weaker assumption, see \cite[Problem F31]{BMS02},  \cite[Problem 6]{DV96} and \cite[Conjecture 8.3]{Ven02}.

In the special case where \(\Sigma=\Delta\) and $h=\id$, the equalizer is precisely the fixed subgroup.  Indeed, for any
endomorphism \(\phi:F(\Sigma)\to F(\Sigma)\),
\[
        \Fix(\phi)
        =\{w\in F(\Sigma)\mid \phi(w)=w\}
        =\Eq(\phi,\id).
\]
Thus the fixed subgroup problem is the one-map  special  case of the equalizer problem. By using train-track maps, Bestvina and Handel  \cite{BH92} proved the well-known Scott conjecture
for automorphisms, that is, Conjecture~\ref{conj:Stallings} holds in the special  case where \(F_n=F\),   $g\in\Aut (F_n)$ and $h=\id$. Later, Imrich and
Turner \cite{IT89} extended Bestvina--Handel's results to all endomorphisms. Dicks
and Ventura \cite{DV96} proved stronger inertness results for families of injective
endomorphisms, and Bergman  \cite{Ber99} extended their results to
families of endomorphisms. For related recent developments, see \cite{Jai26,LZ23}.

For general equalizers, the situation is harder.    Ventura \cite{Ven02}
noted that the injectivity hypothesis in Conjecture~\ref{conj:Stallings} is essential: without it, equalizers
may have infinite rank.
  Ciobanu and Logan \cite{CL20} proved algorithmic
and rank results for immersions, relating the problem to the Post
Correspondence Problem.  Logan \cite{Log22} proved the case $n=2$ in Conjecture~\ref{conj:Stallings} and obtained further positive results under inertness or retract
hypotheses on the images. Very recently, Carvalho and Silva \cite{CS26} investigated the fixed points and equalizers of injective endomorphisms of the free group at infinity.

The purpose of this paper is to give  a family of explicit counterexamples to Conjecture~\ref{conj:Stallings} for $n\ge 3$.
Our main result is the following.

\begin{thm}\label{thm:main}
 Let $n\ge 2$ and $F_n$ be a free group of rank $n$. Then 
 there exist monomorphisms
$
        g,h:F_n\longrightarrow F_2
$
such that
\[
        \rk\Eq(g,h)\ge 2n-2.
\] Consequently, Conjecture~\ref{conj:Stallings} is false for every $n\ge 3$.

\end{thm}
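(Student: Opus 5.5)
The plan is to build the counterexample explicitly and to certify the rank of the equalizer with a Stallings-graph computation. The right tool is a ``colored'' Stallings graph. Take a finite connected graph $\Gamma$ whose edges are labeled by $\Sigma=\{x_1,\dots,x_n\}^{\pm}$, and subdivide each $x_i$-edge in two ways: once as a path spelling $g(x_i)$ and once as a path spelling $h(x_i)$ over $\Delta=\{a,b\}$. The equalizer is then read off from a product-type construction. Concretely, a reduced word $w\in F_n$ lies in $\Eq(g,h)$ exactly when the two $\Delta$-paths traced by $w$ end at the same place with the same label. So $\Eq(g,h)$ is the fundamental group of the core of a pullback: pair the rose for $F_n$ with itself, and match $g$-subdivided edges against $h$-subdivided edges, where a vertex records a pair of positions in $g(x_i)$ and $h(x_j)$ together with a ``delay'' word in $F_2$. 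Once this graph is finite, its first Betti number gives the rank. Restricting to states with delay $1$ at the basepoint yields a subgraph whose loops are exactly the equalizer elements.

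Next I choose $g,h$ so that this graph is computable and has large rank. I would aim for a recursive family, for example
\[
g(x_1)=a,\quad g(x_i)=u_i,\qquad h(x_1)=a',\quad h(x_i)=v_i
\]
with $u_i,v_i$ built from $a,b$, arranged so that each new generator $x_i$ ($i\ge3$) adds exactly two independent cycles to the colored graph. The guiding principle is the base case $n=2$: the bound $2n-2=2$ equals Logan's bound $n$, so I first look for a rank-$2$ example in $F_2$, for instance the fixed subgroup of a suitable endomorphism, or words where $g(x_1x_2)=h(x_2x_1)$-type coincidences occur. I then construct $g(x_i),h(x_i)$ for the new generators so that the pair $(g(x_i),h(x_i))$ ``crosses'' with existing delays in two distinct ways. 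That produces two new equalizer elements, say of the form $x_i w x_i^{-1}w'$ and $w''x_iw'''$.

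Injectivity is routine. I pick the images so that each of $\{g(x_i)\}$ and $\{h(x_i)\}$ has a folded Stallings graph that is a rose-like graph with $n$ independent cycles, which gives rank $n$ and hence a monomorphism. Small cancellation or prefix-code choices, e.g. $a^{k}b a^{k}$ patterns with distinct $k$, make this immediate.

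The main obstacle is the lower bound $\rk\ge 2n-2$. Writing down $2n-2$ equalizer elements is easy, but showing they are independent, or more robustly computing the Betti number of the core of the colored graph, requires tracking all delay states and checking that no unexpected folding collapses cycles. I would do this by exhibiting the equalizer core graph explicitly: one vertex per reachable delay state, all finite by design. Then I compute $E-V+1$ for it and check that the graph is already folded, meaning no two edges with the same $\Sigma$-label leave any vertex. This folded-graph check is what makes the rank count rigorous. Finally, since $2n-2>n$ for $n\ge3$, the example contradicts Conjecture~\ref{conj:Stallings}.
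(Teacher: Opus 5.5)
There is a genuine gap. Theorem~\ref{thm:main} is an existence statement, and essentially all of its content is the explicit pair of monomorphisms, which you never produce. You say you ``would aim for'' images $u_i,v_i$ ``arranged so that'' each new generator adds two independent cycles. But you give no words, and no mechanism that would force two new independent equalizer loops per generator while keeping both maps injective. Injectivity is also not routine once the images are constrained by the equalizer requirement.

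Your delay bookkeeping is essentially the paper's color map $C(v)=g(\ell(p))^{-1}h(\ell(p))$, so the framework is sound; the missing idea is the choice of maps. The paper takes $F_n=\langle t,x_1,\dots,x_{n-1}\rangle$ with $g(t)=a$ and $h(t)=b$, so that reading $t^i$ produces the pairwise distinct delays $c_i=a^{-i}b^i$. It then sets $g(x_i)=h(x_i)=c_i^2$. Because $c_i^2$ commutes with $c_i$, the letter $x_i$ closes up both at delay $1$ and at delay $c_i$, so $x_i$ and $t^ix_it^{-i}$ both lie in $\Eq(g,h)$. The resulting graph has $n$ vertices and $3n-3$ edges, hence rank $2n-2$. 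Injectivity of $g$ comes from the basis change $y_i=t^ix_i$, which gives $g(y_i)=b^ia^{-i}b^i$; the set $\{a,b^ia^{-i}b^i\}$ has a folded Stallings graph of rank $n$. Injectivity of $h$ follows from the swap $a\leftrightarrow b$, which sends $c_i$ to $c_i^{-1}$.

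Your plan for certifying the lower bound is also flawed. The set of all reachable delays $g(w)^{-1}h(w)$ is in bijection with the right cosets of $\Eq(g,h)$. This set is infinite whenever $g\neq h$: a finite-index equalizer would give $g(x)^k=h(x)^k$ for every $x$, hence $g=h$ by uniqueness of roots. So the graph is not ``finite by design.'' Cutting it down to the core would require showing that no other states lie on reduced closed paths, which is an upper-bound problem you have no tool for.

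Foldedness alone does not make the count rigorous either. A folded graph whose based loops lie in $\Eq(g,h)$ only shows that $\Eq(g,h)$ contains a subgroup of that rank, and subgroups of free groups can have larger rank than the ambient group. What you actually need is Lemma~\ref{lem:free-factor}. A finite connected subgraph whose vertices carry pairwise distinct delays embeds in the Stallings graph of $\Eq(g,h)$. Its subgroup is therefore a free factor, and its rank is a genuine lower bound. With that lemma you never have to compute the whole equalizer.
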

\begin{rem} 
In fact, we construct a subgroup family \(A_n\) of rank $2n-2$ such that
\[
A_n\leq_{\mathrm{ff}}\operatorname{Eq}(g,h),
\]
where \(\leq_{\mathrm{ff}}\) denotes a free factor inclusion.
In particular,
\[
\operatorname{rk}\operatorname{Eq}(g,h)\geq 2n-2.
\]
Moreover, the same family shows that the analogue of Conjecture~\ref{conj:Stallings} is false if one replaces the assumption that both \(g\) and \(h\) are injective by the assumption that \(g\) is injective and \(h\) is not, for every \(n\geq 5\); see Proposition~\ref{prop:one-noninj}.
\end{rem}
\medskip
 \noindent\textbf{Sketch of our proofs. }We introduce the notion of colored Stallings graphs for a pair $(g,h)$, which are Stallings graphs decorated by a vertex color, recording the twisted difference $g(\ell(p))^{-1}h(\ell(p))$ along path $p$ in the graph. We will see that injectivity of the color map forces the represented subgroup to be a free factor of the equalizer, giving a lower bound for the rank of the equalizer. Our examples are constructed by the following: let us consider homomorphisms with domain
\[
        F_n=\langle t,x_1,\ldots,x_{n-1}\rangle,
\]
and with codomain the rank-two free group \(F(a,b)\).   The generator $t$ creates a chain of colors
\[
        c_0,c_1,\ldots,c_{n-1},
\]
and each generator $x_i$ gives one loop at $c_0$ and one loop at $c_i$.  Thus the graph has $n$ vertices and
\[
        (n-1)+2(n-1)=3n-3
\]
positive oriented edges.  Its rank is therefore
\[
        1-n+3n-3=2n-2.
\]

\medskip
\noindent\textbf{Organization of the paper.}
In Section~\ref{s:2}, we recall the Stallings-graph formalism and introduce colored Stallings
 graphs. We prove the key embedding
criterion which detects free factors of equalizers. In Section~\ref{s:3}, we
construct the homomorphisms used in the main example and prove, by an explicit
Stallings-graph argument, that they are injective. In Section~\ref{s:4}, we
build the colored Stallings graphs \(\Gamma_n\), compute the rank of the
subgroups they represent, and prove Theorem~\ref{thm:main}. Section~\ref{s:5}
records stabilized variants in which one or both homomorphisms are allowed to
be non-injective. Finally, Section~\ref{s:6} proposes a possible sharp upper
bound suggested by the examples constructed here.

\section{Colored Stallings graphs and equalizers}\label{s:2}

Folded labeled graphs were introduced by Stallings~\cite{Sta83} in the
study of subgroups of free groups. We first recall the standard combinatorial
language of \(X\)-labeled graphs and Stallings graphs; see
Kapovich--Myasnikov~\cite[Section~2]{KM02} for background. Readers already
familiar with this material may skip to Subsection~\ref{subsec:colored-graphs}.

\subsection{\(X\)-labeled and folded graphs}
\begin{defin}[\(X\)-labeled graph]
Let \(X\) be a finite set, and let \(F(X)\) be the free group with basis
\(X\). An \emph{\(X\)-labeled graph} \(\Gamma\) consists of a vertex set
\(V\Gamma\), an oriented edge set \(E\Gamma\), maps
\[
\iota,\tau:E\Gamma\to V\Gamma
\]
recording the initial and terminal vertices, a fixed-point-free involution
\[
e\mapsto \bar e
\]
on \(E\Gamma\), and a labelling map
\[
\ell:E\Gamma\to X^{\pm1},
\]
such that
\[
\iota(\bar e)=\tau(e),\qquad
\tau(\bar e)=\iota(e),\qquad
\ell(\bar e)=\ell(e)^{-1}.
\] Additional notes are as follows:
\begin{enumerate}
    \item a \emph{topological edge} is the unordered pair \(\{e,\bar e\}\). We write
\(E^+\Gamma\) for a choice of one oriented edge from each topological edge; 
\item an \emph{oriented path} \(p\) in \(\Gamma\) is a finite sequence of oriented edges
\[
p=e_1e_2\cdots e_m
\]
such that
\[
\tau(e_j)=\iota(e_{j+1})
\]
for all \(j\). Its label is
\[
\ell(p)=\ell(e_1)\ell(e_2)\cdots \ell(e_m)\in F(X).
\]
The empty path has label \(1\). The inverse path is
\[
\bar p=\bar e_m\cdots \bar e_1,
\]
and satisfies
\[
\ell(\bar p)=\ell(p)^{-1};
\] 
    \item  a path is \emph{reduced} if it has no immediate backtracking, that is, if
\[
e_{j+1}\neq \bar e_j
\]
for all \(j\);
\item A \emph{morphism} of based \(X\)-labeled graphs is a basepoint-preserving graph
morphism which preserves initial vertices, terminal vertices, inverse edges,
and labels;
\item if \((\Gamma,v_0)\) is a connected based \(X\)-labeled graph, it represents
the subgroup
\[
L_X(\Gamma,v_0)
=
\{\ell(p)\in F(X)\mid p \text{ is a closed path in }\Gamma
\text{ based at }v_0\}.
\]
\end{enumerate}
\end{defin}

\begin{defin}[Folded graph]
    The graph \(\Gamma\) is called \emph{folded} if, for every vertex
\(v\in V\Gamma\) and every letter \(x\in X^{\pm1}\), there is at most one
oriented edge \(e\in E\Gamma\) such that
\[
\iota(e)=v,\qquad \ell(e)=x.
\]
\end{defin}

\subsection{The Stallings graph of a subgroup}

Let \(R_X\) be the rose with one vertex and one positively oriented edge
labeled by \(x\) for each \(x\in X\). We identify
\[
\pi_1(R_X)\cong F(X).
\]

Let \(H\leq F(X)\). The Schreier covering graph of \(H\), denoted $\Sigma_X(H)$, has vertices the right cosets
\[
Hg,\qquad g\in F(X),
\]
base vertex \(H\), and, for every \(x\in X\), an oriented edge
\[
Hg \xrightarrow{x} Hgx.
\]
The reverse edge has label \(x^{-1}\).
Thus every reduced word \(w\in F(X)\) determines a unique reduced path in
\(\Sigma_X(H)\) starting at the base vertex \(H\), and this path ends at the
vertex \(Hw\). In particular, this path is closed at the base vertex if and
only if $w\in H$.

The \emph{based core subgraph} of \(\Sigma_X(H)\) is the union of all reduced
closed paths in \(\Sigma_X(H)\) based at the vertex \(H\). Equivalently, it
is the subgraph consisting of all vertices and edges which occur along the
path labeled by the reduced representative of some element of \(H\). We
denote this based subgraph by $\Gamma_X(H)$.
If \(H=1\), then \(\Gamma_X(H)\) is the one-vertex graph consisting only of
the base vertex.
\begin{defin}[The Stallings graph of a subgroup]
The above based graph \(\Gamma_X(H)\) is called the \emph{Stallings graph of
\(H\) over \(X\)}. When the underlying  set and subgroup are clear, we simply call it \emph{Stallings graph}. By the former construction, \(\Gamma_X(H)\) is connected and folded,
and it represents exactly \(H\):
\[
L_X(\Gamma_X(H),\ast_H)=H,
\]
where we denote the base vertex \(H\) by \(*_H\).
Moreover, \(\Gamma_X(H)\) is finite if and only if \(H\) is finitely
generated.

\end{defin}
Now, we illustrate the correspondence between subgroups and graphs.
\begin{rem}\label{rem:subgroup-graph}
As shown above, every subgroup of \(F(X)\) has an associated based
\(X\)-labeled graph. Conversely, let \((\Gamma,v_0)\) be a connected folded based
\(X\)-labeled graph. Suppose that every topological edge of \(\Gamma\) lies
in some reduced closed path based at \(v_0\). Then
\[
(\Gamma,v_0)\cong \Gamma_X(L_X(\Gamma,v_0))
\]
as based \(X\)-labeled graphs. Thus subgroups \(H\leq F(X)\) are represented, up to based label-preserving
isomorphism, by connected folded based \(X\)-labeled graphs in which every
edge lies in a reduced closed path based at the base vertex. 
\end{rem}
The subgroup--graph correspondence is compatible with inclusions of subgroups, and the rank of a finitely generated subgroup can be read off from its Stallings graph, as recalled below.
\begin{rem}\label{rem:rank}
If
$
A\leq B\leq F(X),
$
then the map on right cosets
\[
Ag\longmapsto Bg
\]
induces a basepoint-preserving label-preserving morphism of Schreier graphs
\[
\Sigma_X(A)\longrightarrow \Sigma_X(B).
\]
Since based reduced closed paths map to based reduced closed paths, this
morphism restricts to a natural morphism of Stallings graphs
\[
\rho_{A,B}:\Gamma_X(A)\longrightarrow \Gamma_X(B).
\]
If \(\Gamma\) is finite and connected, then
\[
\operatorname{rk}\pi_1(\Gamma)
=
1-|V\Gamma|+|E^+\Gamma|.
\]
In particular, if \(H\leq F(X)\) is finitely generated, then
\[
\operatorname{rk}H
=
1-|V\Gamma_X(H)|+|E^+\Gamma_X(H)|.
\]
\end{rem}
We will also use the following elementary fact. 
\begin{rem}\label{rem:fact}
    If \(Y\) is a connected
based subgraph of a connected based graph \(Z\), then the inclusion
\[
Y\hookrightarrow Z
\]
induces a free factor inclusion
\[
\pi_1(Y)\leq_{\mathrm{ff}}\pi_1(Z).
\]
Indeed, one may choose a maximal tree in \(Y\) and extend it to a maximal
tree in \(Z\). The graph basis associated to the larger tree contains the
graph basis associated to \(Y\).
\end{rem}
\subsection{Colored \(X\)-graphs and colored Stallings graphs}\label{subsec:colored-graphs}

\begin{defin}[Colored \(X\)-graph]\label{def:colored}
Let \(G\) be a free group, and let
\[
g,h:F(X)\to G
\]
be homomorphisms. A \emph{colored \(X\)-graph for \((g,h)\)} is a connected
based \(X\)-labeled graph \((\Gamma,v_0)\), together with a map
\[
C:V\Gamma\to G,
\]
called the \emph{color map}, such that
\[
C(v_0)=1
\]
and, for every oriented edge \(e\in E\Gamma\), one has
\begin{equation}\label{eq:color-cond}
    g(\ell(e))
=
C(\iota(e))\,h(\ell(e))\,C(\tau(e))^{-1}.
\end{equation}

Here \(g\) and \(h\) are understood on letters in \(X^{\pm1}\) by
\[
g(x^{-1})=g(x)^{-1},\qquad h(x^{-1})=h(x)^{-1}.
\]
\end{defin}
It is enough to check \eqref{eq:color-cond} on one chosen orientation of each topological
edge, since the condition for the reverse orientation follows by taking
inverses. We now introduce the colored version of Stallings graphs. This device will be used to produce explicit free factors of equalizers.
\begin{defin}[Colored Stallings graph] If the underlying based graph \((\Gamma,v_0)\) in Definition~\ref{def:colored} is a Stallings
graph, then we call $(\Gamma,v_0,C)$ a \emph{colored Stallings graph}.
\end{defin}

\begin{lem}\label{lem:telescoping}
Let \((\Gamma,v_0,C)\) be a colored \(X\)-graph for \((g,h)\). Let \(p\) be
an oriented path in \(\Gamma\) from \(v_0\) to a vertex \(v\). Then
\[
C(v)=g(\ell(p))^{-1}h(\ell(p)).
\]
Consequently, every closed path in \(\Gamma\) based at \(v_0\) has label in $
\operatorname{Eq}(g,h).$
\end{lem}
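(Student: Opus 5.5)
The plan is to argue by induction on the length $m$ of the path $p=e_1e_2\cdots e_m$, using only the edge condition \eqref{eq:color-cond} in the rearranged form
\[
C(\tau(e)) = g(\ell(e))^{-1}\,C(\iota(e))\,h(\ell(e)),
\]
which holds for every oriented edge $e$, including reverse orientations, by the remark after Definition~\ref{def:colored}. The base case $m=0$ is the empty path: then $v=v_0$, $\ell(p)=1$, and the claim reduces to the normalization $C(v_0)=1$.

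For the inductive step, write $p=p'e$, where $p'$ runs from $v_0$ to $u=\iota(e)$ and $e$ ends at $v=\tau(e)$. By the induction hypothesis, $C(u)=g(\ell(p'))^{-1}h(\ell(p'))$. Substituting into the rearranged edge condition gives
\[
C(v)=g(\ell(e))^{-1}g(\ell(p'))^{-1}h(\ell(p'))h(\ell(e)).
\]
Since $g$ and $h$ are homomorphisms and $\ell(p)=\ell(p')\ell(e)$, this is exactly $g(\ell(p))^{-1}h(\ell(p))$. The only point needing care is the convention that $g,h$ on letters $x^{-1}$ are the inverses of their values on $x$. This makes $g(\ell(p))$ the product of the edge images, so the telescoping is legitimate even when the word $\ell(p)$ is not reduced.

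For the consequence, take a closed path $p$ based at $v_0$. Then $v=v_0$, so $1=C(v_0)=g(\ell(p))^{-1}h(\ell(p))$, which gives $g(\ell(p))=h(\ell(p))$, that is, $\ell(p)\in\Eq(g,h)$.

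There is no real obstacle here. The only subtle step is confirming that the edge condition holds for both orientations of each edge. That follows by inverting \eqref{eq:color-cond} and using $\iota(\bar e)=\tau(e)$, $\tau(\bar e)=\iota(e)$ and $\ell(\bar e)=\ell(e)^{-1}$.
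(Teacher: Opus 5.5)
Your proposal is correct and follows essentially the same route as the paper: induction on the length of $p$, with the edge condition rearranged to $C(\tau(e))=g(\ell(e))^{-1}C(\iota(e))h(\ell(e))$, followed by evaluating at a closed path using $C(v_0)=1$. Your side remark on reverse orientations is harmless but not needed, since the definition already imposes the condition on every oriented edge.
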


\begin{proof}
We prove the first statement by induction on the length of \(p\). For the
empty path, the endpoint is \(v_0\), and the statement is precisely
\[
C(v_0)=1.
\]

Suppose the statement holds for a path \(p\) ending at a vertex \(v\). Let
\(e\) be an oriented edge with
$
\iota(e)=v, \tau(e)=w,
$
and put
$
x=\ell(e)\in X^{\pm1}.
$
By the coloring condition \eqref{eq:color-cond},
\[
g(x)=C(v)h(x)C(w)^{-1}.
\]
Equivalently,
\[
C(w)=g(x)^{-1}C(v)h(x).
\]
Using the induction hypothesis gives
\[
C(w)
=
g(x)^{-1}g(\ell(p))^{-1}h(\ell(p))h(x)
=
g(\ell(p)x)^{-1}h(\ell(p)x).
\]
Since
$
\ell(pe)=\ell(p)x,
$
this proves the induction step.

Now let \(p\) be a closed path based at \(v_0\). Then
\[
1=C(v_0)=g(\ell(p))^{-1}h(\ell(p)).
\]
Hence
\[
g(\ell(p))=h(\ell(p)),
\]
so
\[
\ell(p)\in \operatorname{Eq}(g,h).
\]
\end{proof}

\begin{lem}\label{lem:free-factor}
Let \((\Gamma,v_0,C)\) be a finite colored Stallings graph for \((g,h)\).
Assume that the color map
$
C:V\Gamma\to G
$
is injective. Put
$
A=L_X(\Gamma,v_0)\leq F(X).
$
Then
\[
A\leq \operatorname{Eq}(g,h),
\]
and the natural morphism
\[
\rho:\Gamma_X(A)\longrightarrow
\Gamma_X(\operatorname{Eq}(g,h))
\]
is injective on vertices and on edges. Consequently,
\[
A\leq_{\mathrm{ff}}\operatorname{Eq}(g,h).
\]
In particular,
\[
\operatorname{rk}\operatorname{Eq}(g,h)\geq \operatorname{rk}A.
\]
\end{lem}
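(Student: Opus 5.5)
First, $A\le\operatorname{Eq}(g,h)$ follows directly from Lemma~\ref{lem:telescoping}: every element of $A$ is the label of a closed path at $v_0$. Next, since $(\Gamma,v_0)$ is a Stallings graph, Remark~\ref{rem:subgroup-graph} lets us identify $(\Gamma,v_0)$ with $\Gamma_X(A)$. Put $E=\operatorname{Eq}(g,h)$. Every vertex of $\Gamma_X(A)$ is a coset $Au$, where $u$ is the label of a reduced path $p$ from $v_0$ to that vertex. Under $\rho$ this vertex goes to $Eu$.

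To get injectivity on vertices, I would compare colors with $E$-cosets. Lemma~\ref{lem:telescoping} gives $C(\tau(p))=g(u)^{-1}h(u)$. For $u,u'\in F(X)$ we have $Eu=Eu'$ iff $u'u^{-1}\in E$, i.e.\ $g(u')g(u)^{-1}=h(u')h(u)^{-1}$, i.e.\ $g(u')^{-1}h(u')=g(u)^{-1}h(u)$. So the function $\kappa(Eu)=g(u)^{-1}h(u)$ is well defined and injective on right cosets of $E$, and $C=\kappa\circ\rho$ on vertices of $\Gamma$. Now suppose $\rho(v)=\rho(v')$. Then $C(v)=C(v')$, and injectivity of $C$ gives $v=v'$.

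Injectivity on edges then comes from foldedness. If two edges $e,e'$ have $\rho(e)=\rho(e')$, then $\rho(\iota e)=\rho(\iota e')$, so $\iota e=\iota e'$ by the previous step. They also have the same label. Since $\Gamma$ is folded, $e=e'$.

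Thus $\rho$ embeds $\Gamma_X(A)$ as a connected based subgraph of $\Gamma_X(E)$. I expect the main remaining point to be converting this subgraph inclusion into a free factor statement on the group level. The labeling map identifies $\pi_1(\Gamma_X(E),\ast)$ with $E$, because a folded graph's label map is injective on reduced closed paths. It also identifies $\pi_1$ of the image of $\rho$ with $A$, since $\rho$ preserves labels. Remark~\ref{rem:fact} then gives $A\le_{\mathrm{ff}}E$ compatibly with these identifications.

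Finally, $E$ is finitely generated by Goldstein--Turner when $g,h$ are injective. In general a free factor has rank at most that of the ambient group, and this holds for infinite rank as well. Hence $\operatorname{rk}E\ge\operatorname{rk}A$.
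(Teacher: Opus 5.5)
Your proof is correct and follows the paper's argument: Lemma~\ref{lem:telescoping} gives $A\le\Eq(g,h)$, injectivity of the colors separates vertices, foldedness separates edges, and Remark~\ref{rem:fact} gives the free factor. Your map $\kappa(Eu)=g(u)^{-1}h(u)$ on right cosets is a tidy repackaging of the paper's computation with the closed path $\rho(p_u)\overline{\rho(p_v)}$. The appeal to Goldstein--Turner at the end is unnecessary, since the free-factor rank inequality holds in arbitrary rank, as you yourself note.
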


\begin{proof}
By Lemma~\ref{lem:telescoping}, every closed path in \(\Gamma\) based at \(v_0\) has label in
\(\operatorname{Eq}(g,h)\). Therefore
\[
A=L_X(\Gamma,v_0)\leq \operatorname{Eq}(g,h).
\]

Since \((\Gamma,v_0)\) is a Stallings graph, Remark~\ref{rem:subgroup-graph} gives a based
label-preserving isomorphism
\[
(\Gamma,v_0)\cong \Gamma_X(A).
\]
Using this identification, the inclusion
$
A\leq \operatorname{Eq}(g,h)
$
gives the natural morphism
\[
\rho:\Gamma\longrightarrow \Gamma_X(\operatorname{Eq}(g,h)).
\]

We first prove that \(\rho\) is injective on vertices. Let
$
u,v\in V\Gamma,
$
and suppose that
$
\rho(u)=\rho(v).
$
Choose oriented paths \(p_u,p_v\) in \(\Gamma\) from \(v_0\) to \(u\) and
\(v\), respectively. Since the images of \(u\) and \(v\) coincide, the path
$
\rho(p_u)\,\overline{\rho(p_v)}
$
is a closed path based at the base vertex of
\(\Gamma_X(\operatorname{Eq}(g,h))\). Hence its label lies in
\(\operatorname{Eq}(g,h)\). Equivalently,
\[
\ell(p_u)\ell(p_v)^{-1}\in \operatorname{Eq}(g,h).
\]
Therefore
\[
g\bigl(\ell(p_u)\ell(p_v)^{-1}\bigr)
=
h\bigl(\ell(p_u)\ell(p_v)^{-1}\bigr).
\]
This gives
\[
g(\ell(p_u))g(\ell(p_v))^{-1}
=
h(\ell(p_u))h(\ell(p_v))^{-1}.
\]
Multiplying this equality on the left by \(g(\ell(p_u))^{-1}\) and on the
right by \(h(\ell(p_v))\), we obtain
\[
g(\ell(p_v))^{-1}h(\ell(p_v))
=
g(\ell(p_u))^{-1}h(\ell(p_u)).
\]
Equivalently,
\[
g(\ell(p_u))^{-1}h(\ell(p_u))
=
g(\ell(p_v))^{-1}h(\ell(p_v)).
\]
By Lemma~\ref{lem:telescoping},
\[
C(u)=C(v).
\]
Since \(C\) is injective, it follows that
\[
u=v.
\]
Thus \(\rho\) is injective on vertices.

We next prove injectivity on oriented edges. Suppose that
$
e,e'\in E\Gamma
$
and
$
\rho(e)=\rho(e').
$
Then
\[
\rho(\iota(e))=\rho(\iota(e')).
\]
By vertex injectivity,
\[
\iota(e)=\iota(e').
\]
Moreover, since \(\rho\) preserves labels,
\[
\ell(e)=\ell(e').
\]
Because \(\Gamma\) is folded, there is at most one outgoing edge with a given
label at a given vertex. Hence
\[
e=e'.
\]
Thus \(\rho\) is injective on edges.

It follows that \(\Gamma\) is identified with a connected based labeled
subgraph
\[
Y=\rho(\Gamma)\subseteq \Gamma_X(\operatorname{Eq}(g,h)).
\]
Under this identification, the subgroup represented by \(Y\) is exactly
\(A\). By the free factor statement in Remark~\ref{rem:fact}, the inclusion
\[
Y\hookrightarrow \Gamma_X(\operatorname{Eq}(g,h))
\]
induces a free factor inclusion on the represented subgroups. Hence
\[
A\leq_{\mathrm{ff}}\operatorname{Eq}(g,h).
\]
In particular,
\[
\operatorname{rk}\operatorname{Eq}(g,h)\geq \operatorname{rk}A.
\]
\end{proof}

\section{Injectivity of the construction}\label{s:3}

Fix $n\ge 2$ and put
\[
        X_n=\{t,x_1,\ldots,x_{n-1}\},
        \qquad
        F_n=\langle t,x_1,\ldots,x_{n-1}\rangle,
        \qquad
        G=F(a,b).
\]
For $0\leq i\leq n-1$,
define
\[
        c_i=a^{-i}b^i.
\]
Thus $c_0=1$.  

We shall define
\[
        g(t)=a,\qquad h(t)=b,\qquad g(x_i)=h(x_i)=c_i^2.
\]
The aim of this section is to prove, by an explicit Stallings-graph argument, that both maps are injective.

For $1\le i\le n-1$, set
\[
        d_i=b^ia^{-i}b^i.
\]

\begin{cons}\label{cons:Delta}
Let $\Delta_n$ be the following labeled graph over the alphabet $X=\{a,b\}$.
It has a base vertex $o$.  There is one $a$-loop at $o$.  There is a positive $b$-path of length $n-1$
\[
        o=P_0\xrightarrow{b}P_1\xrightarrow{b}\cdots\xrightarrow{b}P_{n-1},
\]
and a negative $b$-path of length $n-1$
\[
        o=Q_0\xrightarrow{b^{-1}}Q_1\xrightarrow{b^{-1}}\cdots\xrightarrow{b^{-1}}Q_{n-1},
\]
where the vertices \(P_i\) and \(Q_j\) are all distinct except for
\(P_0=Q_0=o\).
For each \(i=1,\ldots,n-1\), attach a new path
\[
        P_i=R_{i,0}\xrightarrow{a^{-1}}R_{i,1}
        \xrightarrow{a^{-1}}\cdots
        \xrightarrow{a^{-1}}R_{i,i}=Q_i.
\] 
The interior vertices \(R_{i,1},\ldots,R_{i,i-1}\) are new, and the
interiors of these paths are pairwise disjoint and disjoint from the two
\(b\)-paths. 
\end{cons}

\begin{lem}\label{lem:delta-rank}
The based graph \((\Delta_n,o)\) is a Stallings graph over \(X=\{a,b\}\).
Moreover,
\[
L_X(\Delta_n,o)=\langle a,d_1,\ldots,d_{n-1}\rangle\leq F(a,b),
\]
where
\[
d_i=b^ia^{-i}b^i.
\]
In particular,
\[
\operatorname{rk}L_X(\Delta_n,o)=n,
\]
and
\[
a,d_1,\ldots,d_{n-1}
\]
freely generate this subgroup.
\end{lem}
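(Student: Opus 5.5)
The plan has three parts: check that $\Delta_n$ is folded, check that every edge lies on a reduced closed path at $o$, and then read off a free basis from a spanning tree. Once the first two are done, Remark~\ref{rem:subgroup-graph} identifies $(\Delta_n,o)$ with $\Gamma_X(L_X(\Delta_n,o))$, so it is a Stallings graph. The rank and generators then follow from Remark~\ref{rem:rank}.

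\emph{Foldedness.} I would check this vertex by vertex, listing the outgoing labels in $\{a^{\pm1},b^{\pm1}\}$.
\begin{itemize}
\item At $o$ the outgoing labels are $a$ and $a^{-1}$ from the loop, $b$ toward $P_1$, and $b^{-1}$ toward $Q_1$. These are four distinct labels.
\item At $P_i$ with $1\le i\le n-1$ the outgoing labels are $b^{-1}$ (back), possibly $b$ (forward, if $i<n-1$), and $a^{-1}$ into $R_{i,1}$ (or into $Q_1$ when $i=1$). These are distinct.
\item At $Q_j$ the outgoing labels are $b$ (back), possibly $b^{-1}$ (forward), and $a$ (the reverse of the last edge of the $a^{-1}$-path from $P_j$). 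These are distinct.
\item An interior vertex $R_{i,k}$ has only the labels $a$ and $a^{-1}$.
\end{itemize}
The only delicate point is that each $P_i$ and each $Q_i$ has exactly one $a$-path attached. This holds by construction, since path $i$ joins $P_i$ to $Q_i$ and no other path touches those vertices.

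\emph{Core condition.} The $a$-loop is a reduced closed path at $o$. For each $i$, the path $o\to P_i$ (along the $b$-path), then $P_i\to Q_i$ (along the $a^{-1}$-path), then $Q_i\to o$ is a reduced closed path. Traversing the last segment $Q_i\to o$ backwards along the $b^{-1}$-edges reads $b^i$. So this path has label $b^i a^{-i} b^i=d_i$, and it contains no backtracking because consecutive edges carry different letters at every junction. Every edge of the graph lies on one of these loops: the $b$-edges, the $b^{-1}$-edges, and the $a$-path edges are all covered. This verifies the hypothesis of Remark~\ref{rem:subgroup-graph}.

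\emph{Counting and basis.} Let me count vertices and edges. There are $2(n-1)+1$ vertices on the two $b$-paths plus $\sum_{i=1}^{n-1}(i-1)$ interior vertices $R_{i,k}$. There are $1+2(n-1)+\sum_{i=1}^{n-1} i$ positive edges. Hence
\[
\operatorname{rk}=1-|V\Delta_n|+|E^+\Delta_n|=1+1+(n-1)=n.
\]

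For the basis, take as maximal tree $T$ the union of the two $b$-paths together with, for each $i$, all edges of the $i$-th $a$-path except its last edge into $Q_i$. This is a tree because it is connected and each attached path contributes exactly as many new vertices as edges kept. The edges not in $T$ are the $a$-loop and the last edge of each $a$-path, giving $n$ non-tree edges. The associated graph basis consists of $a$ and the loops through the $i$-th missing edge, whose labels are exactly $d_i$. The standard fact that a maximal tree gives a free basis of $\pi_1$ then shows that $a,d_1,\ldots,d_{n-1}$ freely generate $L_X(\Delta_n,o)$.

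The main care needed is the bookkeeping in the case $i=1$, where the $a$-path has no interior vertex, and making sure the $a$-edges incident to $P_i$ and $Q_i$ never collide. I expect this to be routine.
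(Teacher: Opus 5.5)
Your proposal is correct and is essentially the paper's proof. You check foldedness, verify the core condition via the reduced loops labelled $d_i$, and read off a graph basis from a maximal tree containing both $b$-paths; you omit the last rather than the first edge of each $a^{-i}$-path, which changes nothing. Two small fixes are needed. First, the intermediate expression $1+1+(n-1)$ should be $1+(n-1)$; your final value $n$ is right. Second, the step from a free basis of $\pi_1(\Delta_n,o)$ to a free basis of $L_X(\Delta_n,o)$ should explicitly say that the label map $\pi_1(\Delta_n,o)\to F(a,b)$ is injective because $\Delta_n$ is folded (an immersion), as the paper does.
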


\begin{proof}
First we check that \(\Delta_n\) is folded. Along the two \(b\)-paths there
is at most one outgoing edge with each label \(b\) or \(b^{-1}\) at every
vertex. Along each attached \(a^{-i}\)-path there is at most one outgoing
edge with each label \(a\) or \(a^{-1}\). The interiors of the attached
paths are pairwise disjoint. At the base vertex \(o\), the outgoing labels
are
\[
a,\ a^{-1},\ b,\ b^{-1},
\]
which are distinct. Thus \(\Delta_n\) is folded.

Next we check the based core condition. The \(a\)-loop at \(o\) is a reduced
closed path based at \(o\). For each \(i=1,\ldots,n-1\), the path
\[
o=P_0 \xrightarrow{b^i} P_i
\xrightarrow{a^{-i}} Q_i
\xrightarrow{b^i} Q_0=o
\]
is a reduced closed path based at \(o\), and its label is
\[
b^ia^{-i}b^i=d_i.
\]
These paths contain all edges of the two \(b\)-paths and all edges of the
attached \(a^{-i}\)-paths. Hence every topological edge of \(\Delta_n\) lies
in a reduced closed path based at \(o\). Therefore \((\Delta_n,o)\) is a
Stallings graph.

It remains to identify the subgroup represented by \(\Delta_n\). Choose a
maximal tree \(T\) as follows. Put into \(T\) all edges of the two \(b\)-paths.
For each attached path
\[
P_i=R_{i,0}\to R_{i,1}\to\cdots\to R_{i,i}=Q_i,
\]
put into \(T\) all its edges except the first edge
\[
R_{i,0}=P_i\to R_{i,1}.
\]
Finally, do not put the \(a\)-loop at \(o\) into \(T\). Then the topological
edges outside \(T\) are exactly the \(a\)-loop at \(o\) and the first edge
of each attached \(a^{-i}\)-path. Hence there are \(n\) non-tree edges.

The graph basis of \(\pi_1(\Delta_n,o)\) associated to \(T\) has one
generator labelled \(a\), coming from the \(a\)-loop at \(o\). For the
omitted first edge of the \(i\)-th attached path, the corresponding based
loop goes from \(o\) to \(P_i\), crosses this omitted edge, and then returns
to \(o\) through the tree. Its label is
\[
b^i a^{-1} a^{-(i-1)} b^i
=
b^i a^{-i} b^i
=
d_i.
\]
Therefore the labels of the graph basis are precisely
\[
a,d_1,\ldots,d_{n-1}.
\]

Since \(\Delta_n\) is folded, the label map
\[
\Delta_n\longrightarrow R_{\{a,b\}}
\]
is an immersion of graphs. Hence it is injective on fundamental groups, so
\[
\pi_1(\Delta_n,o)\longrightarrow F(a,b)
\]
is injective. Hence the labels of the above graph basis freely generate the
represented subgroup. Thus
\[
L_X(\Delta_n,o)=\langle a,d_1,\ldots,d_{n-1}\rangle,
\]
and this subgroup has rank \(n\).
\end{proof}

\begin{prop}\label{prop:injective-square}
Define homomorphisms
$
        g,h:F_n\to F(a,b)
$
by
\[
        g(t)=a,\qquad h(t)=b,\qquad g(x_i)=h(x_i)=c_i^2=(a^{-i}b^i)^2
\]
for $1\le i\le n-1$.  Then both $g$ and $h$ are injective.
\end{prop}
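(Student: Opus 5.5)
The plan is to show that each of $g$ and $h$ carries the basis $t,x_1,\ldots,x_{n-1}$ of $F_n$ to a free basis of its image. A homomorphism of free groups that sends a basis to a free basis of a subgroup is an isomorphism onto that subgroup, hence injective. To recognise free bases I will use Lemma~\ref{lem:delta-rank}, which says that $a,d_1,\ldots,d_{n-1}$ freely generate $L_X(\Delta_n,o)$. I will also use the standard fact that Nielsen transformations, and automorphisms of $F(a,b)$, send free bases of a subgroup to free bases of the corresponding subgroup.

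For $g$, I would observe the identity
\[
c_i^2=a^{-i}b^ia^{-i}b^i=a^{-i}\,d_i .
\]
So the tuple $(a,c_1^2,\ldots,c_{n-1}^2)$ is obtained from $(a,d_1,\ldots,d_{n-1})$ by left-multiplying each $d_i$ by $a^{-i}$, a power of another element of the tuple. This is a composition of Nielsen moves. Since $(a,d_1,\ldots,d_{n-1})$ is a free basis by Lemma~\ref{lem:delta-rank}, the tuple $(g(t),g(x_1),\ldots,g(x_{n-1}))$ is a free basis of the same subgroup. Hence $g$ is injective.

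For $h$, the image tuple is $(b,c_1^2,\ldots,c_{n-1}^2)$. Right-multiplying $c_i^2$ by $b^{-i}$, again a Nielsen move using the entry $b$, gives
\[
e_i=a^{-i}b^ia^{-i}.
\]
So it suffices to show that $b,e_1,\ldots,e_{n-1}$ is a free basis. Let $\psi$ be the automorphism of $F(a,b)$ defined by $\psi(a)=b^{-1}$ and $\psi(b)=a^{-1}$. Then
\[
\psi(b)=a^{-1},\qquad \psi(e_i)=b^{i}a^{-i}b^{i}=d_i .
\]
The tuple $(a^{-1},d_1,\ldots,d_{n-1})$ is a free basis by Lemma~\ref{lem:delta-rank}, since inverting one entry is a Nielsen move. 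Pulling back by the automorphism $\psi^{-1}$ shows that $(b,e_1,\ldots,e_{n-1})$ is a free basis. Undoing the Nielsen moves then shows that $(h(t),h(x_1),\ldots,h(x_{n-1}))$ is a free basis, so $h$ is injective.

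The key steps are the two algebraic identities $c_i^2=a^{-i}d_i$ and $\psi(a^{-i}b^ia^{-i})=d_i$. I expect the only real obstacle to be the case of $h$: the image of $h$ is not literally of the form covered by $\Delta_n$. It has to be transported there via the Nielsen move $c_i^2 \mapsto c_i^2 b^{-i}$ and the swap-and-invert automorphism $\psi$, and the check that these compose correctly is where care is needed. If that route failed, my fallback would be to build a second folded graph directly, consisting of a $b$-loop with $a$-paths and a $b^{i}$ arc. I would then verify that it is a Stallings graph and read off the basis exactly as in the proof of Lemma~\ref{lem:delta-rank}.
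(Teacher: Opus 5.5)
Your proof is correct and follows essentially the paper's route: for $g$, your Nielsen moves $d_i\mapsto a^{-i}d_i=c_i^2$ are the image-side form of the paper's change of basis $y_i=t^ix_i$ with $g(y_i)=d_i$, and both reduce to Lemma~\ref{lem:delta-rank}. For $h$, the paper instead reduces to the already-proved injectivity of $g$, using the swap $\sigma$ of $a$ and $b$; since $\sigma(c_i)=c_i^{-1}$, it carries $(b,c_1^2,\ldots,c_{n-1}^2)$ to $(a,c_1^{-2},\ldots,c_{n-1}^{-2})$ and so avoids your extra move $c_i^2\mapsto c_i^2b^{-i}$, while your version with $\psi$ also checks out and lands directly on the basis of Lemma~\ref{lem:delta-rank}.
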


\begin{proof}
We first prove that $g$ is injective.  For each $i$, put $y_i=t^ix_i$.
The assignment
\[
        t\mapsto t,\qquad x_i\mapsto t^i x_i\quad (1\le i\le n-1)
\]
extends to an automorphism of \(F_n\). Hence
\[
        \{t,y_1,\ldots,y_{n-1}\}
\]
is again a free basis of \(F_n\).

Now
\[
        g(y_i)=g(t)^ig(x_i)=a^i(a^{-i}b^i)^2=b^ia^{-i}b^i=d_i.
\]
Thus the image under $g$ of the free basis
\[
        \{t,y_1,\ldots,y_{n-1}\}
\]
is
\[
        \{a,d_1,\ldots,d_{n-1}\}.
\]
By Lemma~\ref{lem:delta-rank}, this set freely generates a free subgroup of rank $n$.  Hence $g$ is injective.

We now prove that \(h\) is injective. Since \(g\) is injective and
\(\{t,x_1,\ldots,x_{n-1}\}\) is a free basis of \(F_n\), the set
\[
\{a,c_1^2,\ldots,c_{n-1}^2\}
\]
is a free basis of the subgroup \(g(F_n)\). Hence
\[
\{a,c_1^{-2},\ldots,c_{n-1}^{-2}\}
\]
is also a free basis, obtained by inverting some basis elements.

Let
\[
\sigma:F(a,b)\to F(a,b),\qquad \sigma(a)=b,\quad \sigma(b)=a.
\]
Then
\[
\sigma(c_i)=\sigma(a^{-i}b^i)=b^{-i}a^i=c_i^{-1}.
\]
Therefore
\[
\sigma\bigl(\{b,c_1^2,\ldots,c_{n-1}^2\}\bigr)
=
\{a,c_1^{-2},\ldots,c_{n-1}^{-2}\}.
\]
Since the set on the right freely generates a free subgroup, so does
\[
\{b,c_1^2,\ldots,c_{n-1}^2\}.
\]
These are precisely the images under \(h\) of the basis
\[
\{t,x_1,\ldots,x_{n-1}\}.
\]
Hence \(h\) is injective.
\end{proof}

\section{The counterexample family}\label{s:4}

We now keep the maps $g$ and $h$ from Proposition~\ref{prop:injective-square} fixed:
\[
        g(t)=a,\qquad h(t)=b,\qquad g(x_i)=h(x_i)=c_i^2.
\]

\begin{cons}\label{cons:Gamma}
Let $\Gamma_n$ be the labeled graph with vertices
\[
        v_0,v_1,\ldots,v_{n-1}.
\]
The base vertex is $v_0$.  For each $1\le i\le n-1$, the graph has a directed $t$-edge
\[
        v_{i-1}\xrightarrow{t}v_i.
\]
For each $i=1,\ldots,n-1$, the graph also has two $x_i$-loops:
\[
        v_0\xrightarrow{x_i}v_0,\qquad v_i\xrightarrow{x_i}v_i.
\]
Assign the color
\[
        C(v_i)=c_i=a^{-i}b^i.
\] See Figure~\ref{fig:Gamma-n}.
\end{cons}

\begin{figure}[H]
\centering
\begin{tikzpicture}[scale=1.0, >=Stealth, vertex/.style={circle,draw,inner sep=2pt,minimum size=23pt}]
    \node[vertex] (v0) at (0,0) {$v_0$};
    \node[vertex] (v1) at (2.3,0) {$v_1$};
    \node[vertex] (v2) at (4.6,0) {$v_2$};
    \node (dots) at (6.9,0) {$\cdots$};
    \node[vertex] (vn) at (9.2,0) {$v_{n-1}$};

    \draw[->] (v0) -- node[above] {$t$} (v1);
    \draw[->] (v1) -- node[above] {$t$} (v2);
    \draw[->] (v2) -- node[above] {$t$} (dots);
    \draw[->] (dots) -- node[above] {$t$} (vn);

    \node at (-1.2,-0.50) {$\vdots$};
    \draw[->] (v0) to[out=118,in=62,looseness=9] node[above] {$x_1$} (v0);
    \draw[->] (v0) to[out=178,in=122,looseness=10] node[left] {$x_2$} (v0);
    \draw[->] (v0) to[out=238,in=302,looseness=9] node[below left] {$x_{n-1}$} (v0);

    \draw[->] (v1) to[out=120,in=60,looseness=8] node[above] {$x_1$} (v1);
    \draw[->] (v2) to[out=120,in=60,looseness=8] node[above] {$x_2$} (v2);
    \draw[->] (vn) to[out=120,in=60,looseness=8] node[above] {$x_{n-1}$} (vn);
\end{tikzpicture}
\caption{The graph $\Gamma_n$.}
\label{fig:Gamma-n}
\end{figure}
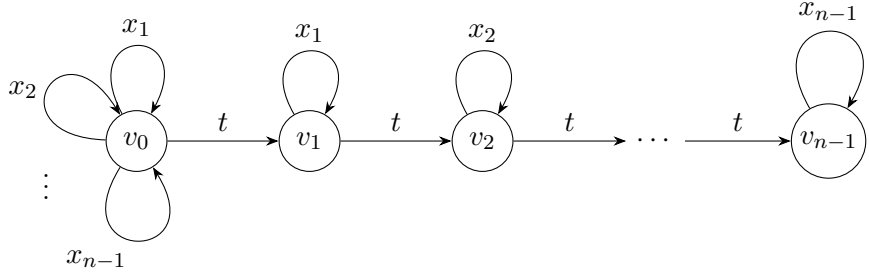

\begin{lem}\label{lem:edge-check}
The triple \((\Gamma_n,v_0,C)\) is a colored Stallings graph for the pair
\((g,h)\).
\end{lem}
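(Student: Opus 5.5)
We have to verify two things: that $(\Gamma_n,v_0)$ is a Stallings graph over $X_n=\{t,x_1,\ldots,x_{n-1}\}$, and that $C$ satisfies \eqref{eq:color-cond} on one orientation of each topological edge. We also note that $C(v_0)=c_0=a^0b^0=1$, as Definition~\ref{def:colored} requires.

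\emph{Coloring condition.} For a $t$-edge $v_{i-1}\xrightarrow{t}v_i$ with $1\le i\le n-1$ we must show
\[
a = c_{i-1}\, b\, c_i^{-1}.
\]
Expanding,
\[
c_{i-1}bc_i^{-1} = a^{-(i-1)}b^{i-1}\cdot b\cdot b^{-i}a^{i} = a^{-(i-1)}a^{i} = a,
\]
so the condition holds. For an $x_i$-loop at $v_0$ the condition reads $c_i^2 = 1\cdot c_i^2\cdot 1$, which is trivial. For an $x_i$-loop at $v_i$ it reads $c_i^2 = c_i\,c_i^2\,c_i^{-1}$, which also holds, since $c_i$ commutes with $c_i^2$. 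By the remark after Definition~\ref{def:colored}, checking these positive orientations suffices.

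\emph{Folded.} At $v_0$ the outgoing labels are $t$ (when $n\ge2$), $x_i^{\pm1}$ for all $i$ (from the loops), and no $t^{-1}$. At $v_i$ with $1\le i\le n-1$ they are $t^{-1}$, $t$ (if $i<n-1$), and $x_i^{\pm1}$. Each loop contributes exactly one edge labeled $x_i$ and one labeled $x_i^{-1}$, and no vertex carries two loops with the same label. Hence all outgoing labels at every vertex are distinct, and $\Gamma_n$ is folded.

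\emph{Core condition.} Each $x_i$-loop at $v_0$ is a reduced closed path based at $v_0$. For the remaining edges, consider the path
\[
t^{i}x_it^{-i}\colon\quad v_0\xrightarrow{t^{i}} v_i\xrightarrow{x_i} v_i\xrightarrow{t^{-i}} v_0 .
\]
It is reduced, since $x_i\neq t^{\pm1}$ prevents backtracking. It contains the loop at $v_i$ and all $t$-edges $v_{j-1}\to v_j$ with $j\le i$. Taking $i=n-1$ covers the whole $t$-chain. So every topological edge lies in a reduced closed path based at $v_0$, and by Remark~\ref{rem:subgroup-graph} the based graph $(\Gamma_n,v_0)$ is the Stallings graph of $L_{X_n}(\Gamma_n,v_0)$.

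The only step with real content is the $t$-edge computation, which is the telescoping identity for the colors $c_i$; everything else is routine bookkeeping.
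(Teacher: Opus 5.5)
Your proposal is correct and follows essentially the same route as the paper. You check the coloring equation on the $t$-edges and on both kinds of $x_i$-loops by the same computations. You also check foldedness and the core condition, using the based loops $x_i$ and $t^i x_i t^{-i}$. Explicitly noting $C(v_0)=1$ and covering the whole $t$-chain with the single loop for $i=n-1$ are only cosmetic differences.
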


\begin{proof}
By the observation following Definition~\ref{def:colored}, it suffices to
verify the edge condition
\[
        g(y)=C(v)h(y)C(w)^{-1}
\]
for the chosen orientations of the topological edges listed in
Construction~\ref{cons:Gamma}.

First consider the edge $ v_{i-1}\xrightarrow{t}v_i$.
Since
\[
        C(v_{i-1})=a^{-(i-1)}b^{i-1},
        \qquad
        C(v_i)=a^{-i}b^i,
\]
we have
\[
\begin{aligned}
        C(v_{i-1})h(t)C(v_i)^{-1}
        &=a^{-(i-1)}b^{i-1}\,b\,(a^{-i}b^i)^{-1} \\
        &=a^{-(i-1)}b^ib^{-i}a^i \\
        &=a.
\end{aligned}
\]
This is $g(t)$.

Next, we consider an $x_i$-loop at $v_0$.  Since $C(v_0)=1$, we have
\[
        C(v_0)h(x_i)C(v_0)^{-1}=c_i^2=g(x_i).
\]

Finally, consider the $x_i$-loop at $v_i$.  Since $C(v_i)=c_i$ and $h(x_i)=c_i^2$, we have
\[
        C(v_i)h(x_i)C(v_i)^{-1}=c_ic_i^2c_i^{-1}=c_i^2=g(x_i).
\]

Thus every edge satisfies the required condition.

Moreover, the underlying based graph \((\Gamma_n,v_0)\) is a Stallings graph.
It is folded: at each vertex there is at most one outgoing edge with any
prescribed label in \(X_n^{\pm1}\). Also, every edge lies in a reduced closed
path based at \(v_0\). Indeed, the \(x_i\)-loop at \(v_0\) lies in the based
loop labelled \(x_i\), whereas the \(x_i\)-loop at \(v_i\) lies in the based
loop labelled \(t^i x_i t^{-i}\). Finally, the \(t\)-edge from \(v_{j-1}\)
to \(v_j\) lies in the reduced based loop labelled \(t^j x_j t^{-j}\).
\end{proof}

\begin{lem}\label{lem:rank}
Let
$
A_n=L_{X_n}(\Gamma_n,v_0)\leq F_n
$
be the subgroup represented by \(\Gamma_n\).
Then
\[
        A_n\le_{\mathrm{ff}} \Eq(g,h)\quad \text{and}\quad
        \rk A_n=2n-2.
\]
\end{lem}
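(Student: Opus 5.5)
The plan is to derive both claims from Lemma~\ref{lem:free-factor}, applied to the colored Stallings graph $(\Gamma_n,v_0,C)$ of Lemma~\ref{lem:edge-check}. That lemma needs two inputs: $\Gamma_n$ is finite, and the color map $C$ is injective. Finiteness is immediate, since $\Gamma_n$ has $n$ vertices and finitely many edges. The real work is injectivity of $C$, that is, showing that
\[
c_i=a^{-i}b^i,\qquad 0\le i\le n-1,
\]
are pairwise distinct elements of $F(a,b)$. The words $a^{-i}b^i$ are freely reduced, because there is no cancellation between the $a$-block and the $b$-block. They have pairwise different lengths $2i$, so distinct $i$ give distinct reduced words and hence distinct group elements. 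In particular $c_0=1$ differs from all the others. Lemma~\ref{lem:free-factor} then gives $A_n\le_{\mathrm{ff}}\Eq(g,h)$ directly.

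For the rank, I will use the formula in Remark~\ref{rem:rank}, $\rk\pi_1(\Gamma)=1-|V\Gamma|+|E^+\Gamma|$. This is legitimate because $(\Gamma_n,v_0)$ is a Stallings graph by Lemma~\ref{lem:edge-check}, so Remark~\ref{rem:subgroup-graph} identifies it with $\Gamma_{X_n}(A_n)$. Alternatively, since $\Gamma_n$ is folded, the label map is an immersion and is therefore $\pi_1$-injective, so $\rk A_n=\rk\pi_1(\Gamma_n)$.

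It remains to count. There are $|V\Gamma_n|=n$ vertices. The positive edges are the $n-1$ $t$-edges $v_{i-1}\to v_i$, together with $2(n-1)$ $x_i$-loops, one at $v_0$ and one at $v_i$ for each $i$. This gives $|E^+\Gamma_n|=3n-3$, and therefore
\[
\rk A_n=1-n+3n-3=2n-2.
\]

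The only step requiring care is the injectivity of $C$, which I expect to be the main (though mild) obstacle. It rests on the elementary fact that distinct freely reduced words represent distinct elements of a free group. One should also check that the loops at $v_0$ and at $v_i$ carrying the same label $x_i$ do not violate foldedness, since they sit at different vertices. This was already verified in Lemma~\ref{lem:edge-check}.
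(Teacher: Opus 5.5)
Your proposal is correct and follows essentially the same route as the paper: you get injectivity of $C$ from the distinct reduced words $a^{-i}b^i$, apply Lemma~\ref{lem:free-factor} to obtain the free factor inclusion, and then compute $\rk A_n=1-n+(3n-3)=2n-2$ from the Euler characteristic count of Remark~\ref{rem:rank}. Your extra remarks, namely the explicit length argument for distinctness and the $\pi_1$-injectivity of the immersion, only make explicit what the paper takes for granted.
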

\begin{proof}
The colors are
\[
        C(v_i)=c_i=a^{-i}b^i,\qquad 0\le i\le n-1.
\]
These elements are pairwise distinct as reduced words in \(F(a,b)\). Hence
the color map \(C:V\Gamma_n\to F(a,b)\) is injective.

By Lemma~\ref{lem:edge-check}, the triple \((\Gamma_n,v_0,C)\) is a finite
colored Stallings graph for \((g,h)\). Therefore Lemma~\ref{lem:free-factor}
gives
\[
        A_n\leq_{\mathrm{ff}}\Eq(g,h).
\]

Since the underlying based graph \((\Gamma_n,v_0)\) is a finite Stallings
graph, the label map identifies \(\pi_1(\Gamma_n,v_0)\) with \(A_n\). Hence
\[
\operatorname{rk}A_n
=
\operatorname{rk}\pi_1(\Gamma_n,v_0)
=
1-|V\Gamma_n|+|E^+\Gamma_n|.
\]
The graph \(\Gamma_n\) has \(n\) vertices. It has \(n-1\) topological
\(t\)-edges, and for each \(i=1,\ldots,n-1\) it has two \(x_i\)-loops. Thus
\[
|E^+\Gamma_n|=(n-1)+2(n-1)=3n-3.
\]
Therefore
\[
        \rk A_n
        =1-n+3n-3
        =2n-2.
\]
\end{proof}
\begin{proof}[Proof of Theorem~\ref{thm:main}]
By Proposition~\ref{prop:injective-square}, both \(g\) and \(h\) are
monomorphisms. By Lemma~\ref{lem:rank}, the subgroup
$
        A_n=L_{X_n}(\Gamma_n,v_0)
$
satisfies
\[
        A_n\leq_{\mathrm{ff}}\Eq(g,h)
        \qquad\text{and}\qquad
        \rk A_n=2n-2.
\]
Hence
\[
        \rk\Eq(g,h)\geq 2n-2.
\]
If \(n\ge3\), then \(2n-2>n\), so Conjecture~\ref{conj:Stallings} is false
for every \(n\ge3\).
\end{proof}


\section{Stabilized non-injective applications}\label{s:5}

The   same construction of Sections~\ref{s:3} and~\ref{s:4}  also yields consequences when one allows non-injective maps.  The reason is that one may add dummy generators without changing the large subgroup already lying in the equalizer.

\begin{prop}\label{prop:one-noninj}
For every integer $r\ge 5$, there exist homomorphisms
\[
        \alpha,\beta:F_r\to F
\]
to a free group $F$ such that $\alpha$ is injective, $\beta$ is not injective, and
\[
        \rk\Eq(\alpha,\beta)>r.
\]
\end{prop}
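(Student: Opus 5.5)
The plan is to stabilize the construction of Sections~\ref{s:3} and~\ref{s:4} by adding a single dummy generator. The injective map sends it to a new free letter and the non-injective map kills it. The subgroup \(A_n\) of Lemma~\ref{lem:rank} stays inside the equalizer as a free factor. Concretely, given \(r\ge5\), put \(n=r-1\ge4\) and
\[
F_r=\langle t,x_1,\ldots,x_{n-1},z\rangle = F_n * \langle z\rangle,\qquad F=F(a,b,z)=F(a,b)*\langle z\rangle .
\]
With \(g,h\) as in Proposition~\ref{prop:injective-square}, define \(\alpha,\beta:F_r\to F\) by \(\alpha|_{F_n}=g\), \(\beta|_{F_n}=h\), \(\alpha(z)=z\) and \(\beta(z)=1\).

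First I check the injectivity claims. The map \(\alpha\) is the free product of the monomorphisms \(g:F_n\to F(a,b)\) and \(\id:\langle z\rangle\to\langle z\rangle\). A free product of injective homomorphisms between free factors is injective, for instance by normal forms, or because the images \(g(F_n)\) and \(\langle z\rangle\) generate their free product inside \(F(a,b)*\langle z\rangle\). Hence \(\alpha\) is injective. The map \(\beta\) is not injective, since \(z\neq1\) but \(\beta(z)=1\).

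Next I view \(\Gamma_n\) from Construction~\ref{cons:Gamma} as an \(X_r\)-labeled graph, where \(X_r=X_n\cup\{z\}\), with the same colors \(C(v_i)=c_i\in F(a,b)\le F\). The edge condition \eqref{eq:color-cond} only involves the letters \(t\) and \(x_i\), on which \(\alpha,\beta\) agree with \(g,h\). So the verification of Lemma~\ref{lem:edge-check} carries over verbatim, and \((\Gamma_n,v_0,C)\) is a finite colored Stallings graph for \((\alpha,\beta)\). Foldedness and the core condition are unaffected, since no \(z\)-edges appear. The colors \(a^{-i}b^i\) remain pairwise distinct in \(F\).

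Lemma~\ref{lem:free-factor} never uses injectivity of the two maps, so it applies and gives \(A_n\le_{\mathrm{ff}}\Eq(\alpha,\beta)\). Therefore \(\rk\Eq(\alpha,\beta)\ge\rk A_n=2n-2=2r-4\). Finally, \(2r-4>r\) exactly when \(r>4\), which is the hypothesis \(r\ge5\).

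The only point needing real care is the injectivity of \(\alpha\) on the free product. If one wants to stay within the paper's Stallings-graph framework, I would argue it by taking \(\Delta_n\) from Construction~\ref{cons:Delta} and wedging a \(z\)-loop at \(o\). The result is still folded, so it is an immersion and hence \(\pi_1\)-injective. It has rank \(n+1\), and its graph basis has labels \(a,d_1,\ldots,d_{n-1},z\). These are exactly the \(\alpha\)-images of the basis \(\{t,t^ix_i,z\}\) of \(F_r\), so \(\alpha\) is injective as in the proof of Proposition~\ref{prop:injective-square}.
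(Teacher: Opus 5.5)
Your proposal is correct and is essentially the paper's own proof: set \(n=r-1\), extend \(g,h\) by a dummy generator \(z\) that \(\alpha\) sends to a new free letter and \(\beta\) kills, note that \((\Gamma_{r-1},v_0,C)\) is still a colored \(X_r\)-Stallings graph with injective colors, and apply Lemma~\ref{lem:free-factor} to get \(\rk\Eq(\alpha,\beta)\ge 2r-4>r\). Two of your steps differ slightly from the paper, and both are valid: your extra Stallings-graph check of the injectivity of \(\alpha\) (wedging a \(z\)-loop onto \(\Delta_{r-1}\)), and your use of foldedness plus the core condition, rather than the normal form theorem, to see that \(\Gamma_{r-1}\) is unchanged over \(X_r\).
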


\begin{proof} Put \(m=r-1\). Since \(r\ge 5\), we have \(m\ge 4\). 
Let $X_m=\{t,x_1,\ldots,x_{m-1}\}$ and $X_r=X_m\sqcup\{z\}.$
Apply
Theorem~\ref{thm:main} to obtain monomorphisms
\[
        g,h:F_m\to F_2
\]
and the colored Stallings graph \((\Gamma_m,v_0,C)\) whose represented
subgroup
\[
        A_m=L_{X_m}(\Gamma_m,v_0)
\]
satisfies
\[
        A_m\leq_{\mathrm{ff}}\Eq(g,h),\qquad \rk A_m=2m-2.
\]

Write
\[
        F_r=F_m*\langle z\rangle.
\]
Let \(s\) be a new free generator and let the target be
\[
        F=F_2*\langle s\rangle.
\]
Define
\[
        \alpha|_{F_m}=g,\qquad \alpha(z)=s,
\]
and
\[
        \beta|_{F_m}=h,\qquad \beta(z)=1.
\]
Then \(\alpha\) is injective, since it is the free product of the
monomorphism \(g\) with the embedding of \(\langle z\rangle\) as
\(\langle s\rangle\). The map \(\beta\) is not injective, because
\(z\in\ker\beta\).

It remains to justify the rank estimate. Since \(A_m\le F_m\le F_r=F(X_r)\), the \(X_r\)-Stallings graph of
\(A_m\) is the same graph \((\Gamma_m,v_0)\). Indeed, by the normal form theorem for free products, a reduced word in the
enlarged basis \(X_r\) which contains \(z^{\pm1}\) represents an element outside the free factor \(F_m\).

Moreover, \((\Gamma_m,v_0,C)\) is a colored \(X_r\)-Stallings graph for
\((\alpha,\beta)\). All edges of \(\Gamma_m\) are labelled by letters in
\(X_m\), and on those edges the coloring equations are exactly the
coloring equations for \((g,h)\). The color map remains injective after
viewing its values in \(F_2*\langle s\rangle\).

Therefore Lemma~\ref{lem:free-factor} gives
\[
        A_m\leq_{\mathrm{ff}}\Eq(\alpha,\beta).
\]
Hence
\[
        \rk\Eq(\alpha,\beta)\ge \rk A_m=2m-2=2(r-1)-2=2r-4.
\]
For \(r\ge 5\), one has \(2r-4>r\).
\end{proof}

\begin{prop}\label{prop:two-noninj}
For every integer $r\ge 7$, there exist non-injective homomorphisms
\[
        \alpha,\beta:F_r\to F
\]
to a free group $F$ such that
\[
        \rk\Eq(\alpha,\beta)>r.
\]
\end{prop}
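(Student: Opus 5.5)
Take $m=r-2\ge 5$ and stabilize the construction of Theorem~\ref{thm:main} by two dummy generators, one killed by each map, in the same way as Proposition~\ref{prop:one-noninj}. Write $X_r=X_m\sqcup\{z_1,z_2\}$ and $F_r=F_m*\langle z_1\rangle*\langle z_2\rangle$. Take the target
\[
F=F_2*\langle s_1\rangle*\langle s_2\rangle,
\]
and let $g,h:F_m\to F_2$ be the monomorphisms of Proposition~\ref{prop:injective-square}. Define
\[
\alpha|_{F_m}=g,\quad \alpha(z_1)=s_1,\quad \alpha(z_2)=1,
\qquad
\beta|_{F_m}=h,\quad \beta(z_1)=1,\quad \beta(z_2)=s_2.
\]
Then $z_2\in\ker\alpha$ and $z_1\in\ker\beta$, so both maps are non-injective.

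For the rank bound, use the colored Stallings graph $(\Gamma_m,v_0,C)$ of Construction~\ref{cons:Gamma}, now viewed over the alphabet $X_r$. I will check three things.

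\begin{enumerate}
\item $(\Gamma_m,v_0)$ is still a Stallings graph over $X_r$. It is folded and core, and it contains no $z_j$-edges. Equivalently, by the free product normal form, the $X_r$-Stallings graph of $A_m\le F_m$ coincides with its $X_m$-Stallings graph, exactly as argued in Proposition~\ref{prop:one-noninj}.
\item The coloring equations \eqref{eq:color-cond} for $(\alpha,\beta)$ hold on every edge. Each edge is labelled by a letter of $X_m$, and on such letters $\alpha$ and $\beta$ restrict to $g$ and $h$, so these are exactly the equations verified in Lemma~\ref{lem:edge-check}.
\item The color map $C$ is injective. Its values $c_i=a^{-i}b^i$ are distinct in $F_2$, and $F_2$ embeds in $F$ as a free factor, so they remain distinct in $F$.
\end{enumerate}

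Lemma~\ref{lem:free-factor} then gives $A_m\le_{\mathrm{ff}}\Eq(\alpha,\beta)$. Combining this with Lemma~\ref{lem:rank},
\[
\rk\Eq(\alpha,\beta)\ge \rk A_m=2m-2=2(r-2)-2=2r-6,
\]
and $2r-6>r$ precisely when $r>6$, that is, for $r\ge7$.

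No step here is a real obstacle; this is bookkeeping over the one-sided stabilization. The one point needing care is step~1, the invariance of the Stallings graph under enlarging the alphabet. It follows as before from the fact that $F_m$ is a free factor of $F_r$, so reduced words containing $z_j^{\pm1}$ never lie in $A_m$.

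Actually, a single dummy generator $z$ with $\alpha(z)=\beta(z)=1$ and $m=r-1$ would also make both maps non-injective and give the better range $r\ge5$. The two-generator version is used only because it reproduces the stated threshold $r\ge7$ directly.
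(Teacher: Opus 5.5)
Your proof is correct and follows the paper's argument essentially verbatim: you take $m=r-2$, stabilize by two dummy generators each killed by one of the maps, reuse the colored Stallings graph $(\Gamma_m,v_0,C)$ over the enlarged alphabet, and apply Lemma~\ref{lem:free-factor} to get $\rk\Eq(\alpha,\beta)\ge 2r-6>r$; the only cosmetic difference is that the paper uses a single new target letter $s$ (with $\alpha(z_1)=\beta(z_2)=1$ and $\beta(z_1)=\alpha(z_2)=s$) where you use two. Your closing remark is also right: a single dummy generator $z$ with $\alpha(z)=\beta(z)=1$ and $m=r-1$ makes both maps non-injective and already gives $\rk\Eq(\alpha,\beta)\ge 2r-4>r$ for every $r\ge 5$.
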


\begin{proof}
Put \(m=r-2\). Since \(r\ge 7\), we have \(m\ge 5\). Let $X_m=\{t,x_1,\ldots,x_{m-1}\}$ and $X_r=X_m\sqcup\{z_1,z_2\}$.
  Apply
Theorem~\ref{thm:main} to obtain monomorphisms
\[
        g,h:F_m\to F_2
\]
and the colored Stallings graph \((\Gamma_m,v_0,C)\) whose represented
subgroup
\[
        A_m=L_{X_m}(\Gamma_m,v_0)
\]
has rank
\[
        \rk A_m=2m-2.
\]

Write
\[
        F_r=F_m*\langle z_1\rangle*\langle z_2\rangle.
\]
Let \(s\) be a new free generator and let the target be
\[
        F=F_2*\langle s\rangle.
\]
Define
\[
        \alpha|_{F_m}=g,\qquad \beta|_{F_m}=h,
\]
and set
\[
        \alpha(z_1)=1,\qquad \beta(z_1)=s,
\]
\[
        \alpha(z_2)=s,\qquad \beta(z_2)=1.
\]
Then \(\alpha\) is not injective, because \(z_1\in\ker\alpha\), and
\(\beta\) is not injective, because \(z_2\in\ker\beta\).
As in the proof of Proposition~\ref{prop:one-noninj}, the \(X_r\)-Stallings
graph of \(A_m\) is still \((\Gamma_m,v_0)\), because, by the normal form theorem for free products, no reduced word
involving \(z_1^{\pm1}\) or \(z_2^{\pm1}\) can represent an element of
the free factor \(F_m\).

The same coloring \(C\) makes \((\Gamma_m,v_0,C)\) a colored
\(X_r\)-Stallings graph for \((\alpha,\beta)\), and the color map is
still injective. Lemma~\ref{lem:free-factor} therefore gives
\[
        A_m\leq_{\mathrm{ff}}\Eq(\alpha,\beta).
\]
Consequently
\[
        \rk\Eq(\alpha,\beta)\ge \rk A_m=2m-2=2(r-2)-2=2r-6.
\]
For \(r\ge 7\), one has \(2r-6>r\).
\end{proof}

\begin{rem}\label{rem:inf-rank}
Once both maps are allowed to be non-injective, the equalizer may even fail to be finitely generated.  Here is a Stallings-graph way to see this.

Let
\[
        F(x,y)=\langle x,y\rangle,
        \qquad
        F(a)=\langle a\rangle,
\]
and define
\[
        \alpha(x)=a,\qquad \alpha(y)=1,
        \qquad
        \beta(x)=1,\qquad \beta(y)=a.
\]
Then both $\alpha$ and $\beta$ are non-injective.  If $e_x(w)$ and $e_y(w)$ denote the exponent sums of $x$ and $y$ in a word $w\in F(x,y)$, then
\[
        \alpha(w)=a^{e_x(w)},\qquad \beta(w)=a^{e_y(w)}.
\]
Hence
\[
        w\in\Eq(\alpha,\beta)
        \quad\Longleftrightarrow\quad
        e_x(w)=e_y(w).
\]
Equivalently,
\[
        \Eq(\alpha,\beta)=\ker\phi,
\]
where
\[
        \phi:F(x,y)\to\mathbb Z,\qquad
        \phi(x)=1,\qquad \phi(y)=-1.
\]

The Stallings graph of $\ker\phi$ is the covering of the two-petal rose corresponding to this epimorphism onto $\mathbb Z$.  Its vertices are indexed by the integers.  From the vertex $k$ there is an $x$-edge to $k+1$ and a $y$-edge to $k-1$.  Thus, between two consecutive vertices $k$ and $k+1$, there are two distinct unoriented edges: the $x$-edge from $k$ to $k+1$ and the $y$-edge from $k+1$ to $k$.  This infinite folded Stallings graph has one independent cycle between every consecutive pair of vertices.  Therefore its first Betti number is infinite.  Hence $\Eq(\alpha,\beta)$ is a free group of countably infinite rank and is not finitely generated.
\end{rem}

\section{A possible sharp bound}\label{s:6}

The family above suggests the following candidate for the correct sharp upper bound in the monomorphism case.

\begin{conj}\label{conj:2n-2}
Let $n\ge3$, and let $g,h:F_n\to F$
be monomorphisms into a free group.  Then
\[
        \rk\Eq(g,h)\le 2n-2.
\]
\end{conj}

Theorem~\ref{thm:main} shows that, if this conjecture is true, then it is sharp.



\section*{Acknowledgments}
We are grateful to Professors George Mark Bergman, Matthew G. Brin, Andrei Jaikin-Zapirain, Alan D. Logan, Lorenzo Ruffoni, Pedro Silva, Edward C. Turner, and Enric Ventura Capell for their valuable comments. We also thank Professor Qiang Zhang for bringing this problem to our attention, and Professors Minghua Lin, and Weihua Yang for their support and encouragement.

\end{document}